%
%
%
%

\documentclass{sig-alternate}

\makeatletter
\newif\if@restonecol
\makeatother

\usepackage{amsfonts}
\usepackage{amsmath}
\usepackage{amssymb}
\usepackage[algoruled,vlined,linesnumbered]{algorithm2e}

\DeclareFixedFont{\auacc}{OT1}{phv}{m}{n}{12}   
\DeclareFixedFont{\afacc}{OT1}{phv}{m}{n}{10}   
\begin{document}

\title{Real root refinements for univariate polynomial equations}

%
%
%
%
%

\numberofauthors{1} 
%

\author{
%
%
\alignauthor
Ye Liang \\
       \affaddr{KLMM, Institute of Systems Science,}\\
       \affaddr{Academy of Mathematics and System Science,}\\
       \affaddr{Chinese Academy of Sciences,}\\
       \affaddr{55 Zhongguancun East Road, Haidian District, 100190 Beijing,
       CHINA}\\
       \email{wolf39150422@gmail.com}
}

\maketitle

\begin{abstract}
Real root finding of polynomial equations is a basic problem in
computer algebra. This task is usually divided into two parts:
isolation and refinement. In this paper, we propose two algorithms
LZ1 and LZ2 to refine real roots of univariate polynomial equations.
Our algorithms combine Newton's method and the secant method to
bound the unique solution in an interval of a monotonic convex
isolation (MCI) of a polynomial, and have quadratic and cubic
convergence rates, respectively. To avoid the swell of coefficients
and speed up the computation, we implement the two algorithms by
using the floating-point interval method in \verb"Maple15" with the
package \verb"intpakX". Experiments show that our methods are
effective and much faster than the function \verb"RefineBox" in the
software \verb"Maple15" on benchmark polynomials.
\end{abstract}


\keywords{Real root refinement, Newton's method, floating-point,\\ interval method} 

\newtheorem{theorem}{Theorem}[section]
\newtheorem{proposition}[theorem]{Proposition}
\newtheorem{lemma}[theorem]{Lemma}
\newtheorem{definition}[theorem]{Definition}
\newtheorem{observation}[theorem]{Observation}
\newtheorem{corollary}[theorem]{Corollary}
\newtheorem{problem}[theorem]{Problem}
\newtheorem{remark}[theorem]{Remark}
\newtheorem{example}[theorem]{Example}
\newtheorem{assumption}[theorem]{Assumption}

\section{Introduction}
Solving for real solutions of univariate polynomial equations is a
fundamental problem in computer algebra. Many other problems in
mathematics or other fields can be reduced to it such as real
solving multivariate polynomial equations
\cite{Rouillier99,ChengGaoGuo11,ChengGaoYap09, BCLM09}, studying the
topologies of real algebraic plane curves \cite{CLPPRT10}, and
generating ray-traced images of implicit surfaces in computer
graphics \cite{Mitchell90}. There is a vast literature on
calculating real zeros of univariate polynomials. We refer to
\cite{Pan97} for some of the references.

The process of reliable computing real roots is usually divided into
two steps: \emph{real root isolation} (i.e., cutting the real axis
so that each real root is contained in a separate interval) and
\emph{real root refinement} (i.e., narrowing each isolating interval
to a given width). We mainly concerned with how to efficiently
refine each real root of a univariate polynomial equation to a high
precision. We propose two algorithms LZ1 (Algorithm \ref{LZ1}) and
LZ2 (Algorithm \ref{LZ2}) to refine real roots.

LZ1 is a combination of Newton's method and the secant method. It is
based on Theorem \ref{BasicTheorem} (cf. Theorem 4.6 in
\cite{Yan92}) that can help choose a starting point and guarantee
the quadratic convergence of the point sequence for Newton's method
in an interval. By this theorem, all of the points in the sequence
locate on the same side of the real root $\xi$, i.e., each point in
the sequence provides a bound of $\xi$ of the same kind (upper bound
or lower bound). For each point in the sequence, to get a bound of
$\xi$ of the other kind, the secant method should be applied.
Theorem \ref{Theorem:LZ1} shows that LZ1 has an at least quadratic
convergence rate.

LZ2 is also a combination of Newton's method and the secant method.
But it makes an opposite choice of the starting point for Newton's
method to LZ1. As a result, if we consider the starting point as a
bound of $\xi$, then the point found by Newton's method becomes
another bound of $\xi$ of the other kind. After that, by the secant
method, a new bound of $\xi$ of the same kind with the starting
point can be obtained. In this way, the two kinds of bounds of $\xi$
arise alternately. Thus, the precision of each bound of $\xi$ can
benefit from its predecessor in this point sequence. So, it is not
surprising that the sequence of inclusion intervals consisting of
these bounds converges to $\xi$ at least cubically (cf. Theorem
\ref{Theorem:LZ2}).

However, the implementations of LZ1 and LZ2 will become slow if we
evaluate polynomials in LZ1 and LZ2 exactly, since the
representations of the numbers will swell dramatically. Fortunately,
the floating-point interval method can help solve this problem. But
this method can only deal with the so called well-posed problems in
numerical computation. In fact, to apply this method to speed up the
computation, we have already treated all the ill-posed cases by
exact methods before calling LZ1 or LZ2.

To this aim, a square-free decomposition should be done first to get
a list of square-free polynomials as components and to know the
multiplicity of corresponding roots in each component. Then, we make
a local monotonic convex decomposition (LMCD, cf. Definition
\ref{LMCD}) for each of these square-free polynomials to make sure
each component has a monotonic convex isolation (MCI, cf. Definition
\ref{MCI}). After that, based on existing methods for real root
isolation
\cite{CollinsAkritas76,ABS94,XiaYang02,RZ04,AS05,MRR05,TsigaridasEmiris06,BartonJuttler07,Pan07},
we compute a MCI for any polynomial in a LMCD of every square-free
polynomial obtained in the first step. Finally, floating-point
interval versions of LZ1 and LZ2 can be called safely in theory.

We have implemented LZ1 and LZ2 in \verb"Maple15" with the package
\verb"intpakX" which contains functions for floating-point interval
computation in arbitrary precisions. Experiments were done on
Chebyshev polynomials of the first kind to compare the efficiencies
of LZ1, LZ2 and the \verb"Maple" function \verb"RefineBox". The
timings show that our implementations of LZ1 and LZ2 are much faster
than \verb"RefineBox" and that LZ2 is usually faster than LZ1.

Some variants of Newton's method also exist to reliably refine real
roots of univariate polynomial equations.

Ramon E. Moore \cite{Moore66} in 1966 gave an interval Newton's
method to compute a real root of a univariate real function in a
closed interval. This method can be speeded up by using
floating-point interval arithmetics \cite{Rump10}. It can give
verified results, but sometimes it may fail, e.g. the resulting
interval may contain the original one.

George E. Collins and Werner Krandick \cite{CollinsKrandick93} in
1993 presented three versions of a variant of Newton's method and
proved that the exact version has quadratic convergence rate. Among
the three algorithms, they showed that the one using floating-point
interval arithmetics performed much efficiently than the other two.
Note that the restrictions of the input inclusion interval and the
selection of the starting point for Newton's method in the exact
version of their method are also the same with that in Theorem
\ref{BasicTheorem}. But instead of a secant, when it is needed in
LZ1, they used the line parallel to the tangent at the point that
Newton's method is applied. Moreover, their method needs the input
interval ro satisfy a ``3/4-assumption" to get started.

John Abbott \cite{Abbott06} in 2006 proposed an algorithm named QIR.
Then, based on the QIR, Michael Kerber and Michael Sagraloff
\cite{KerberSagraloff11} in 2011 gave an algorithm EQIR and analyzed
its bit complexity. QIR and EQIR are combinations of the secant
method and the bisection method. One feature of them is that they do
not need any information of derivatives.

The rest of the paper is structured as follows. In Section
\ref{Preliminaries}, we introduce the basic theorem that LZ1 is
based on. Section \ref{Sec:Multiplicities} is devoted to describing
how multiplicities and the square-free polynomial components should
be calculated. The notions of MCI and LMCD are given in Section
\ref{Monotonic convex isolation} in order to get the intervals that
can be used as inputs of LZ1 and LZ2. In Section \ref{Refinement},
the main algorithms LZ1 and LZ2 are presented and their superlinear
convergence properties are proved. Then we discuss how to use
floating-point interval method to speed up LZ1 and LZ2 in
\mbox{Section \ref{Speedup}}. At last we make comparisons on
efficiency among LZ1, LZ2 and the \verb"Maple" function
\verb"RefineBox" in Section \ref{Experiments}.

\section{Basics}\label{Preliminaries}
This section is about a theorem which provides the basis of
Algorithm \ref{LZ1} named LZ1 in Section \ref{Refinement}.


Newton's method is a fast algorithm for computing solutions of
equations. For a function and a stating point, generally, it is not
easy to determine whether Newton's method converges or not. However,
for univariate real functions there exists such a theorem as follows
(cf. \mbox{Theorem 4.6} in \cite{Yan92} written by Qingjin Yan in
1992).

\begin{theorem} \label{BasicTheorem}
If a function $f \in C^2[a,b]$ and satisfies the following
conditions:
\begin{enumerate}
  \item $f(a)f(b)<0$;
  \item the sign of $f''(x)$ does not vary on $[a,b]$;
  \item $f'(x) \neq 0$ when $x \in [a,b]$;
  \item $x_0 \in [a,b]$, $f(x_0)f''(x_0)>0$,
\end{enumerate}
then the sequence $\{x_k\}$ generated by $$ x_{k+1}=x_k -
\frac{f(x_k)}{f'(x_k)}, k=0,1, \ldots$$ monotonically converges to
the unique real root of $f(x)=0$ in $[a,b]$ and the convergence rate
is at least $2$.
\end{theorem}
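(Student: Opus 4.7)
The plan is to reduce to a canonical sign configuration and then obtain monotonicity, convergence, and the quadratic rate all from a single Taylor-expansion identity.

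First I would observe that conditions (2) and (3) force $f'$ and $f''$ each to be of constant sign on $[a,b]$, so $f$ is strictly monotonic and either strictly convex or strictly concave throughout. Combined with (1) and the intermediate value theorem, this yields existence and uniqueness of a root $\xi \in (a,b)$. By replacing $f$ with $\pm f$ (possibly after the change of variable $x \mapsto -x$), I can assume without loss of generality that $f' > 0$ and $f'' > 0$ on $[a,b]$. In this canonical case $f$ is strictly increasing and strictly convex; hypothesis (4) becomes $f(x_0) > 0$, which together with monotonicity places $x_0 \in (\xi,b]$.

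Next I would establish by induction on $k$ that $\xi \le x_{k+1} \le x_k$. The upper bound $x_{k+1} \le x_k$ is immediate from the Newton step, since $f(x_k) \ge 0$ and $f'(x_k) > 0$. For the lower bound, Taylor's theorem supplies some $\eta_k$ between $\xi$ and $x_k$ with
$$0 = f(\xi) = f(x_k) + f'(x_k)(\xi - x_k) + \tfrac{1}{2} f''(\eta_k)(\xi - x_k)^2.$$
Solving for $x_{k+1} - \xi$ gives the key identity
$$x_{k+1} - \xi \;=\; \frac{f''(\eta_k)}{2\, f'(x_k)}\,(x_k - \xi)^2,$$
which is non-negative by the sign hypotheses. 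Hence the sequence $\{x_k\}$ is monotonically decreasing and bounded below by $\xi$, so it converges to some limit $x^{\ast} \ge \xi$. Passing to the limit in the Newton recurrence and using continuity of $f$ and $f'$ yields $f(x^{\ast}) = 0$, whence uniqueness forces $x^{\ast} = \xi$.

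The same identity immediately supplies the quadratic rate: setting $M = \bigl(\max_{[a,b]} |f''|\bigr) / \bigl(2 \min_{[a,b]} |f'|\bigr)$, which is finite and positive by (3) and continuity on a compact interval, we obtain $|x_{k+1} - \xi| \le M\,|x_k - \xi|^2$, i.e., convergence of order at least two. The only real bookkeeping obstacle is handling the four possible sign combinations for $(f',f'')$, but I would dispatch these uniformly via the symmetries $x \mapsto -x$ and $f \mapsto -f$ rather than rerunning the argument in each case; all four reduce to the canonical one above, and in every case the Newton iterates approach $\xi$ monotonically from whichever side is selected by (4).
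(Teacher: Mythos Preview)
Your argument is correct and is the standard textbook proof of Newton's method under the Fourier conditions: reduce by symmetry to the convex increasing case, use the second-order Taylor identity $x_{k+1}-\xi = \dfrac{f''(\eta_k)}{2f'(x_k)}(x_k-\xi)^2$ to get both monotonicity ($\xi \le x_{k+1} \le x_k$) and the quadratic bound. One cosmetic point: condition~(2) allows $f''$ to vanish at isolated points, so ``strictly convex'' should be relaxed to ``convex''; your Taylor identity still yields $x_{k+1}-\xi \ge 0$ from $f''\ge 0$, so nothing in the argument is affected.

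As for comparison with the paper: the paper does not supply its own proof of this statement. Theorem~2.1 is quoted as a known result (Theorem~4.6 of \cite{Yan92}) and is used as a black box to justify the design of Algorithm LZ1; the paper's own arguments (Theorems~5.2 and~5.4) take this convergence for granted. So there is no ``paper's proof'' to compare against---your write-up simply fills in what the paper chose to cite rather than reprove.
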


\begin{remark}
Note that the second condition of Theorem \ref{BasicTheorem} allows
$f''(x)$ to reach zero when $x \in [a,b]$. If we restrict this
condition to ``$f''(x) \neq 0$ when $x \in [a,b]$", then the
iteration sequence will strictly monotonically converges.
\end{remark}



\section{Multiplicities} \label{Sec:Multiplicities}

We briefly recall how to compute multiplicities of roots of
polynomial equations by existing methods.

Though numerical tools exist (e.g. \cite{WZ08,LNZ10}) for computing
multiplicities of roots of polynomial equations and most probably
their outputs are correct, it is still possible that they output
incorrect results sometimes. In other words, for a univariate
polynomial $f \in \mathbb{Q}[x]$, computing the multiplicities of
real or complex roots of the equation $f=0$ is an ill-posed problem
in numerical computation \cite{Rump10}.

Thus, to know these multiplicities exactly, we should use exact
methods. We can make a square-free decomposition of $f$ such that
there exist $s$ polynomials $g_1,\ldots,g_s$ in $\mathbb{Q}[x]$ with
$f=g_1^{r_1} \ldots g_s^{r_s}$ where $1 \leq r_1 < \cdots < r_s \leq
n$ and $(g_i,g_j)=1$ when $i \neq j$. Then $f=0$ has a (complex or
real) root $\xi$ with multiplicity $r$ if and only if there exists a
polynomial $g_i$ in the square-free decomposition of $f$ such that
$r=r_i$ and $\xi$ is a single root of $g_i=0$. The square-free
decomposition of a univariate polynomial is easy to compute in the
software \verb"Maple15" with the function \verb"sqrfree". A
factorization with the function \verb"factor" in the same software
can also help find the multiplicities of corresponding polynomial
zeros. In practice, a square-free decomposition or a factorization
costs much less time than real root isolations and refinements and
can help reduce an reducible polynomial in $\mathbb{Q}[x]$ with high
degree to some lower degree polynomials which are relative easier to
deal with.

In the rest of this paper, we assume that $f$ is a square-free
polynomial with rational coefficients.

\section{Monotonic convex isolations} \label{Monotonic convex isolation}
To refine a real root of a square-free polynomial $f \in
\mathbb{Q}[x]$, it should be first isolated from other real roots.
In this section, we give a concept called monotonic convex isolation
(Definition \ref{Def:Monotonic}). If $f$ has such an isolation, then
any real root of $f$ is contained in a closed interval where the
monotonicity and convexity of $f$ is fixed unless the interval is a
point. The refinements of real roots in the next section will
benefit a lot from the properties of monotonicity and convexity of
$f$ on these isolating intervals. However, certain square-free
polynomials in $\mathbb{Q}[x]$ have no monotonic convex isolations.
In this case, we want to decompose $f$ to a multiplication of some
polynomials that have monotonic convex isolations (Theorem
\ref{Th:decomposition}). This leads to the concept of local
monotonic convex decomposition (Definition \ref{Def:Local}). At the
end of this section, we give an algorithm to compute such a
decomposition.

\begin{definition}[MCI] \label{Def:Monotonic}
Given a real root isolation $Iso$ of a square-free polynomial $f \in
\mathbb{Q}[x]$, we say that $Iso$ is a monotonic convex isolation of
$f$ if $I \in Iso$ is not a point implies that $f'(x) \neq 0$ and
$f''(x) \neq 0$ for all $x \in I$.
\end{definition}

\begin{remark}
``$I \in Iso$ is not a point" in the above definition means that the
closed interval $I$ is not in the form of $[a,a]$ where $a$ is a
rational number. Note that such stuff may exist in $Iso$.
\end{remark}

If $f$ has a MCI, then we can work it out by \mbox{Algorithm
\ref{MCI}}. Note that Algorithm \ref{MCI} is not a definite
algorithm but a description of a class of algorithms. When the base
algorithm for real root isolation (e.g. the \verb"RS" C-library
\cite{RS,Rouillier99,RZ04} of Fabrice Rouillier and
\verb"DISCOVERER" \cite{DISCOVERER,Xia07} of Bican Xia) is chosen,
then \mbox{Algorithm \ref{MCI}} will become definite. Hence, its
efficiency mainly depends on the chosen base algorithm.

\begin{algorithm}[h] \label{MCI}
  \SetLine
  \KwData{A polynomial $f \in \mathbb{Q}[x]$ that has a MCI}
  \KwResult{A monotonic convex isolation of $f$}
  Compute a real root isolation $Iso$ of $f$ in common sense by a base algorithm.

  For each interval $I \in Iso$ that is not a point, cut $I$ by bisection method and
  consider the sign of the value of $f$ at the midpoint of $I$ to find the interval that contains the real root in
  $I$ (sometimes, the midpoint is just the real root).

  Repeat step 2 until neither $f'$ nor $f''$ has real roots in the
  resulting closed interval $I^*$ (this can be done by using inclusion and exclusion criteria).

  Return the set consisting of all the $I^*$ in step 3 and all the point intervals arising in $Iso$ or during the computation of step 3.
  \caption{MCI}
\end{algorithm}

It is obvious that for a nonconstant square-free polynomial $f$, if
it satisfies $(f,f'')=1$ then $f$ has a MCI. However, not all real
polynomials have monotonic convex isolations. We give a
counterexample below.

\begin{example} \label{Exam:ill}
Pick $f$ as $x(x+1)(x+2)$. Then $f'=3x^2+6x+2$ and $f''=6(x+1)$. It
is easy to see that $-1$ is the common real root of $f$ and $f''$.
Obviously, $f$ has no MCI.
\end{example}

Hence, we want a decomposition of $f$ such that each component has a
MCI. This idea delivers the following definition.


\begin{definition}[LMCD] \label{Def:Local}
If a square-free polynomial $f \in \mathbb{Q}[x]$ can be represented
as the multiplication of $s$ polynomials $g_1, \ldots, g_s$ where
$\deg(g_i)=1$ or $\gcd(g_i,g_i'')=1$ for every $i=1,\ldots,s$, then
we say that $f$ has a local monotonic convex decomposition.
\end{definition}

\begin{theorem}\label{Th:decomposition}
Every nonconstant square-free polynomial $f \in \mathbb{Q}[x]$ has a
local monotonic convex decomposition.
\end{theorem}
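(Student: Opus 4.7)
The plan is to invoke unique factorization of $f$ over $\mathbb{Q}$. Since $\mathbb{Q}[x]$ is a UFD and $f$ is nonconstant, we can write $f = c \cdot p_1 p_2 \cdots p_s$ where $c \in \mathbb{Q}^\times$ (which may be absorbed into any one factor so that each $g_i \in \mathbb{Q}[x]$ as required) and each $p_i \in \mathbb{Q}[x]$ is irreducible. The square-freeness of $f$ guarantees that the $p_i$ are pairwise non-associate, although this stronger fact is not strictly needed for the LMCD condition itself. My candidate decomposition is simply $g_i := p_i$.

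The whole argument then reduces to verifying that an irreducible $p \in \mathbb{Q}[x]$ with $\deg p \geq 2$ automatically satisfies $\gcd(p, p'') = 1$. This is short: $\gcd(p, p'')$ is a divisor of $p$ in $\mathbb{Q}[x]$, so by irreducibility of $p$ it is either a non-zero constant or an associate of $p$. The second alternative forces $p \mid p''$, which is impossible, because in characteristic zero the leading coefficient of $p''$ is $n(n-1)$ times that of $p$ (with $n = \deg p \geq 2$), so $p'' \neq 0$ and $\deg p'' = \deg p - 2 < \deg p$. Hence $\gcd(p, p'') = 1$, and together with the linear factors this shows that the irreducible factorization is an LMCD.

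I do not expect any real obstacle. The entire content of the theorem is the degree inequality $\deg p'' < \deg p$ applied to each irreducible factor, combined with the observation that a proper divisor of an irreducible polynomial is constant. It is worth noting in passing that the square-free hypothesis on $f$ is not used in this argument; it is retained only to stay consistent with the standing convention introduced after Section \ref{Sec:Multiplicities}. Example \ref{Exam:ill} illustrates the construction: the three linear irreducible factors of $x(x+1)(x+2)$ already give the LMCD, even though $f$ itself has no MCI.
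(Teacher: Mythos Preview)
Your proof is correct, but it follows a different route from the paper. The paper argues by induction on $\deg f$: setting $g:=\gcd(f,f'')$, either $g=1$ and $f$ itself satisfies the LMCD condition, or $f=g\cdot(f/g)$ with both factors of strictly smaller degree, so the induction hypothesis applies to each. In effect, the paper proves the theorem first and obtains Corollary~\ref{Cor:irreducible} (irreducible $\Rightarrow$ LMCD of itself) as a consequence, whereas you prove the content of that corollary directly and deduce the theorem from the irreducible factorization of $f$. Your argument is conceptually cleaner and makes clear that square-freeness is not needed for the bare existence statement. The paper's inductive splitting, on the other hand, is what underlies Algorithm~\ref{LMCD}: it produces an LMCD using only $\gcd$ computations, avoiding a full factorization over $\mathbb{Q}$, which is typically far more expensive in practice. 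The paper itself acknowledges your observation in the remark after Corollary~\ref{Cor:irreducible}, noting that a known irreducible decomposition already furnishes an LMCD.
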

\begin{proof}
For a polynomial $f \in \mathbb{Q}[x] \setminus \mathbb{Q}$, we
prove the theorem by induction on the degree $d$ of $f$.

When $d=1$, the theorem holds obviously. Supposing the conclusion of
the theorem holds when $d \leq k$ ($k \geq 1$), we
  prove that it also holds for $d=k+1$. Denote $g:=\gcd(f,f'')$. It is easy to see that
  $\deg(g) < \deg(f)$ when $d>1$. If $g=1$ then the
  conclusion holds. Otherwise, there exists a polynomial $h \in \mathbb{Q}[x] \setminus
  \mathbb{Q}$ such that $f=gh$. Since $\deg(g)$ and $\deg(h)$ are all
  no larger than $k$, we know that $g$ and $h$ all have local
  monotonic convex decompositions by the induction assumption. Thus, $f$ has a local
  monotonic convex decomposition.

Therefore, the conclusion of the theorem holds for all nonconstant
square-free polynomials in $\mathbb{Q}[x]$.
\end{proof}

\begin{corollary} \label{Cor:irreducible}
Any nonconstant irreducible polynomial $f \in \mathbb{Q}[x]$ forms a
LMCD of itself.
\end{corollary}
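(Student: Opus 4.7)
The plan is to take $f$ itself as the one-factor product, i.e.\ $s=1$ and $g_1 = f$, and verify that the condition in Definition \ref{Def:Local} is satisfied. Since $f$ is irreducible, it is automatically square-free, so the hypothesis of the definition is met. It then remains to show that either $\deg(f)=1$, or else $\gcd(f,f'')=1$.

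I would split on the degree of $f$. If $\deg(f)=1$, the conclusion is immediate. Otherwise $\deg(f)=n\ge 2$; here I would argue that $f''$ is a nonzero polynomial of degree $n-2$, since the leading coefficient $n(n-1)a_n$ of $f''$ is nonzero whenever $a_n\neq 0$ and $n\ge 2$. Let $d:=\gcd(f,f'')$. Then $d$ divides $f$, and because $f$ is irreducible over $\mathbb{Q}$, $d$ must be either a nonzero constant or an associate of $f$. The latter is impossible because $\deg(d)\le \deg(f'')=n-2<n=\deg(f)$. Therefore $d$ is a constant, i.e.\ $\gcd(f,f'')=1$.

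Combining the two cases, the single-factor product $f=g_1$ with $g_1=f$ satisfies the definition of a LMCD, so $f$ forms a LMCD of itself.

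The proof is essentially a one-line observation once the right degree bookkeeping is laid out; the only mildly subtle point — and the one I would be most careful about — is ruling out $f''\equiv 0$ in the degree $\ge 2$ case, which is what makes the strict inequality $\deg(f'')<\deg(f)$ (and hence the irreducibility argument) work.
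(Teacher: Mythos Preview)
Your argument is correct and is exactly the intended one: the paper states the corollary without proof, treating it as immediate from Definition~\ref{Def:Local} together with the observation (already used in the proof of Theorem~\ref{Th:decomposition}) that $\deg(\gcd(f,f''))<\deg(f)$ when $\deg(f)>1$, which combined with irreducibility forces $\gcd(f,f'')=1$. Your careful handling of the $f''\not\equiv 0$ point is the only thing one needs to check, and you have it right.
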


\begin{remark}
Sometimes, the irreducibility of a polynomial $f$ in $\mathbb{Q}[x]$
is easy to test by certain irreducibility criteria, e.g.
Eisenstein's criterion. If the irreducible decomposition of $f$ in
$\mathbb{Q}[x]$ is known, then it is also a monotonic convex
decomposition of $f$ according to Corollary \ref{Cor:irreducible}.
\end{remark}

Based on the proof of Theorem \ref{Th:decomposition}, we give
\mbox{Algorithm \ref{LMCD}} (LMCD) to compute a local monotonic
convex decomposition of $f$. The termination and correctness are
obvious and we omit the proofs. In most cases, the algorithm LMCD
only tests that $f$ and $f''$ are coprime. This test is usually very
fast in practice.

\begin{algorithm}[h] \label{LMCD}
  \SetLine
  \KwData{A nonconstant square-free polynomial $f \in \mathbb{Q}[x]$}
  \KwResult{A local monotonic convex decomposition of $f$}
  $S:=\{\}$\;
  \If{$\deg(f)=1$}{$S:= \{f\}$\; \KwRet $S$\;}
  $g:=\gcd(f,f'')$\;
  \If{$g=1$}{$S:=\{f\}$\; \KwRet $S$\;}
  $S:=S \cup \textup{LMCD}(g) \cup \textup{LMCD}(f/g)$\;
  \KwRet $S$\;
  \caption{LMCD}
\end{algorithm}

\begin{example}
For the polynomial $f=x^3+3x^2+2x$ in Example \ref{Exam:ill}, it can
be decomposed into two polynomials by Algorithm \ref{LMCD}, i.e.,
the output is $\{x+1,x^2+2x\}$. For $x+1=0$, we can directly compute
the root. For $x^2+2x$, it has a monotonic convex isolation.
\end{example}

\section{Real Root Refinements} \label{Refinement}

In this section, we study how Newton's method and the secant method
can be combined and applied on the MCI intervals to compute narrower
inclusion intervals of the real roots of a univariate polynomial
equation. For this, we provide two algorithms LZ1 (Algorithm
\ref{LZ1}) and LZ2 (Algorithm \ref{LZ2}), and prove that their
convergence rates are at leat $2$ and $3$, respectively.

We design LZ1 (Algorithm \ref{LZ1}) based on \mbox{Theorem
\ref{BasicTheorem}}. According to this theorem, the initial point
$x_0$ for Newton's method should satisfy the condition
$f(x_0)f''(x_0)>0$, and the point sequence $\{x_i\}_{i=0}^{\infty}$
of Newton's method converges monotonically. This provides a series
of bounds of the real root $\xi$ on one side. Since the convexity
does not change in the input MCI interval $[a,b]$, the secant method
can be used to obtain a bound $c_{i+1}$ on the other side of $\xi$
after getting each $x_i$ by Newton's method. Then a sequence of
inclusion intervals of $\xi$ is obtained. \mbox{Theorem
\ref{Theorem:LZ1}} shows that this sequence converges at least
quadratically.

The second main algorithm LZ2 (Algorithm \ref{LZ2}) is also a
combination of Newton's method and the secant method. In contrast to
LZ1, the condition for selecting initial point $c_0$ for Newton's
iteration in LZ2 is $f(c_0)f''(c_0)<0$. As a result, the point $z$
($x_1$) obtained by Newton's method on $f$ at $c_0$ locates at the
other side of $\xi$. Then we can get $c_1$ as a bound of $\xi$ at
the same side with $c_0$ by the secant method. In this way, the
inclusion interval sequence of $\xi$ can obtain an at least cubic
convergence rate (cf. \mbox{Theorem \ref{Theorem:LZ2}}). However,
note that when $z \not \in [a,b]$, the behavior of $f$ at $z$ will
be out of control. In this case, $[a,b]$ should be cut until the new
$z$ locates in the new initial MCI interval. This task is easy to do
by using the secant method as described in lines
\ref{loop2begin}-\ref{loop2end} of Algorithm \ref{LZ2}. Then the
fast convergent iteration can start.

We want the outputs of LZ1 and LZ2 to be narrow intervals from which
the precisions of the corresponding real roots can be read out
directly. However, no matter how narrow an interval containing zero
is, we cannot obtain any correct bits by comparing the two endpoints
of the interval. Hence, LZ1 and LZ2 are not allowed to input
intervals that contain zero. In fact, when a monotonic convex
isolation of a square-free polynomial $f$ is performed as we
discussed in the last section, it is very easy to check whether
$f=0$ has zero solution or not. If such solution exists, its
isolation interval can be set as the interval $[0,0]$ in the MCI
directly.

\begin{example} \label{Example:LZ1}
Given $L=8$ (decimal digits), a square-free polynomial
$f:=x^3-20x+7$ and an interval
$[\frac{4389}{1024},\frac{1097}{256}]$ in a monotonic convex
isolation of $f$, we show the process of the iteration of Algorithm
\ref{LZ1} as follows.
\begin{align*}
[a_0,b_0] & = [\frac{4389}{1024},\frac{1097}{256}] \\
[a_1,b_1] & =[\frac{40379863349}{9422150912},\frac{80788619485}{18851042816}]\\
[a_2,b_2] & =
[\frac{18537520582609738516073933520140272183127}{4325505299448020965613362785214886187776},\\
          & \frac{503844952756268930733017297728389}{117566100791919345105185727965440}]\\
\end{align*}

$(b_2-a_2)/b_2 \approx 0.6053885328 \times 10^{-14} < 10^{-8}$.
Hence, the algorithm terminates and outputs $[a_2,b_2]$.

\end{example}

\begin{algorithm}[h] \label{LZ1}
  \SetLine
  \KwData{A closed interval $[a,b]$ ($ab>0$ and $a<b$) in a monotonic convex isolation
  of a square-free polynomial $f \in \mathbb{Q}[x]$ and a positive integer
  $L$}

  \KwResult{A closed interval $[a',b']$ such that the only real root $\xi$ of $f=0$ in $[a,b]$ is also
  in it and $(b'-a')/|\xi| \leq 10^{-L}$}
  \If{$|b-a| \leq 10^{-L} \min(|a|,|b|)$}{\KwRet $[a,b]$;}
  \eIf{$f(a)f''(a)>0$}{$x \longleftarrow a$; $c \longleftarrow b$;}
  {$x \longleftarrow b$; $c \longleftarrow a$;}
  \While{$|x-c|> 10^{-L} \min(|x|,|c|)$}{ \label{loop1begin}
    $u \longleftarrow f(x)$\;
    $v \longleftarrow f(c)$\;
    $p \longleftarrow x-u/f'(x)$\;
    $c \longleftarrow (xv-cu)/(v-u)$ \;
    $x \longleftarrow p$\;
    }\label{loop1end}

    \KwRet $[\min(x,c),\max(x,c)]$.

  \caption{LZ1}
\end{algorithm}

\begin{theorem} \label{Theorem:LZ1}
The iteration in Algorithm \ref{LZ1} converges and the convergence
rate is at least 2.
\end{theorem}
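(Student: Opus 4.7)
The plan is to decouple the Newton half of the loop from the secant half, analyze each, and then combine the two error bounds. Let $\xi$ denote the unique real root of $f$ in $[a,b]$, let $x_k$ and $c_k$ denote the values of the program variables $x$ and $c$ at the start of the $k$-th pass through the loop in lines \ref{loop1begin}--\ref{loop1end} (so $x_0,c_0$ are the initial choices), and set $e_k := |x_k-\xi|$, $d_k := |c_k-\xi|$. The goal is to show that the interval width $w_k = e_k + d_k$ satisfies $w_{k+1} = O(w_k^2)$.

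First I would check that $[a,b]$ meets all four hypotheses of Theorem \ref{BasicTheorem}: the conditions $f(a)f(b)<0$, $f'\neq 0$ on $[a,b]$, and $f''$ of constant sign on $[a,b]$ all follow from $[a,b]$ being a non-degenerate interval of a MCI, while the if-branch selecting $x_0$ forces $f(x_0)f''(x_0)>0$. Theorem \ref{BasicTheorem} then gives $x_k\in[a,b]$ for every $k$, monotone convergence $x_k\to\xi$ on a single side of $\xi$, and a quadratic error bound $e_{k+1}\le C_1 e_k^{2}$ with $C_1 = (\max_{[a,b]}|f''|)/(2\min_{[a,b]}|f'|)$.

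Next I would show by induction on $k$ that $c_k$ stays on the opposite side of $\xi$ from $x_k$, so that $x_k$ and $c_k$ always straddle $\xi$. Since $f''$ has constant sign on $[a,b]$, the graph of $f$ lies entirely on one side of each of its chords; applied to the chord joining $(x_k,f(x_k))$ to $(c_k,f(c_k))$, this forces its root $c_{k+1} = (x_kv-c_ku)/(v-u)$ to lie strictly between $\xi$ and $c_k$, so $c_{k+1}$ remains in $[a,b]$ and on the same side of $\xi$ as $c_0$. With both iterates pinned inside $[a,b]$, the standard linear-interpolation error formula supplies $\eta_k,\zeta_k\in[a,b]$ with
\[
\xi - c_{k+1} \;=\; -\,\frac{f''(\eta_k)}{2f'(\zeta_k)}\,(\xi-x_k)(\xi-c_k),
\]
which yields $d_{k+1}\le C_2\, e_k d_k$ for a constant $C_2$ of the same shape as $C_1$.

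Setting $r_k := \max(e_k,d_k)$ and $C := \max(C_1,C_2)$, the Newton bound and the secant bound combine into $r_{k+1}\le C r_k^{2}$, hence $w_{k+1}\le 2r_{k+1}\le 2C w_k^{2}$, which is the claimed order-two convergence. The hypothesis $ab>0$ keeps $|x_k|,|c_k|$ uniformly bounded below, so convergence of $w_k$ to zero forces the relative-width termination test in line \ref{loop1begin} to fail after finitely many passes, proving termination. The main obstacle is the inductive step in paragraph three: one must verify that the secant iterate $c_{k+1}$ does stay inside $[a,b]$ and on the correct side of $\xi$, because this is exactly what keeps the uniform bounds on $|f'|$ and $|f''|$ valid across all iterations and thereby validates both error estimates simultaneously.
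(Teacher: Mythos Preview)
Your proof is correct and follows essentially the same approach as the paper: both invoke Theorem~\ref{BasicTheorem} for the Newton half, use the convexity of $f$ on $[a,b]$ to keep the secant iterates $c_k$ on the opposite side of $\xi$ (you via a chord argument, the paper via Jensen's inequality), and finish with a Taylor-type error estimate. The one organizational difference is that you bound $e_{k+1}$ and $d_{k+1}$ separately and combine via $\max$, whereas the paper computes the full interval width $x_{i+1}-c_{i+1}$ in a single algebraic step to obtain $x_{i+1}-c_{i+1}=\dfrac{f''(\varphi)f'(\lambda)}{2f'(\mu)f'(x_i)}(x_i-c_i)(x_i-\xi)$ directly.
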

\begin{proof} From the loop in lines \ref{loop1begin}-\ref{loop1end} of Algorithm
\ref{LZ1}, we know that the iteration is
\begin{align}
x_{i+1}&=x_i- \frac{f(x_i)}{f'(x_i)} \label{Iteration:Newton} \\
c_{i+1}&=\frac{x_{i}f(c_i) - c_if(x_{i})}{f(c_i)-f(x_{i})}
\label{Iteration:Secant}
\end{align}
where $x_i$ and $c_i$ means the $i$-th iteration of $x$ and $c$ for
$i=0,1,\ldots$, respectively.

We first prove that the real root $\xi$ of the equation $f=0$ always
lies in the open interval between $x_i$ and $c_i$. By Theorem
\ref{BasicTheorem}, we know that the sequence
$\{x_i\}_{i=0}^{\infty}$ converges monotonically to $\xi$ and that
no $x_i$ is equal to $\xi$. Thus, these $x_i$'s are the bounds of
$\xi$ on the same side. We only need to prove that the $c_i$'s are
the bounds of $\xi$ on the other side and $c_i \neq \xi$. Note that
if $c_i$ and $x_{i}$ are on different sides of $\xi$, then
$f(c_i)/(f(c_i)-f(x_{i}))$ and $-f(x_{i})/(f(c_i)-f(x_{i}))$ are all
in $(0,1)$ and their sum is $1$. By induction on $i$, applying
Jessen's inequality we have that $f(c_{i+1})<0$ when $f''(a)>0$ and
$f(c_{i+1})>0$ when $f''(a)<0$, i.e.,
$$f(c_{i+1})f''(a) <0;$$ consequently, $c_{i+1}$ locates on the opposite side of $x_{i+1}$
w.r.t. $\xi$ and $c_{i+1} \neq \xi$. This proves that $\xi$ always
lies in the open interval between $x_{i}$ and $c_{i}$.

Next, we study the convergence of the sequence consisting of the
interval lengths. According to the Mean Value Theorem, there exist a
$\lambda \in (\min\{x_i, \xi \}, \max\{x_i, \xi \})$ and a $\mu \in
(\min\{c_i,x_i\},\max\{c_i,x_i\})$ such that
$f(x_i)=f'(\lambda)(x_i-\xi)$ and
$\frac{f(c_i)-f(x_i)}{c_i-x_i}=f'(\mu)$. Note that the sequence
$\{c_i\}_{i=0}^{\infty}$ converges to $\xi$ monotonically. Indeed,
since $c_{i+1}$ and $c_i$ are on the same side of $\xi$ and
\begin{align*}
c_{i+1}-c_i = \frac{(x_i-c_i)f(c_i)}{f(c_i)-f(x_i)}
            = -\frac{f'(\tau)}{f'(\mu)}(c_i-\xi),
\end{align*}
we know that $\{c_i\}_{i=0}^{\infty}$ is monotone and has a bound
$\xi$. Then $\{c_i\}_{i=0}^{\infty}$ converges and it is easy to see
the limit is $\xi$. Since there exists $\varphi$ between $c_i$ and
$x_i$ such that
$f(c_i)=f(x_i)+(c_i-x_i)f'(x_i)+(c_i-x_i)^2f''(\varphi)/2$, we have
\begin{align*}
x_{i+1}-c_{i+1}&=x_{i}-\frac{f(x_i)}{f'(x_i)}-\frac{x_if(c_i)-c_if(x_i)}{f(c_i)-f(x_i)}\\
 &=f(x_i)(\frac{1}{\frac{f(c_i)-f(x_i)}{c_i-x_i}}-\frac{1}{f'(x_i)})\\
 &=\frac{f(x_i)f''(\varphi)}{2f'(\mu)f'(x_i)}(x_i-c_i)\\
 &=\frac{f''(\varphi)f'(\lambda)}{2f'(\mu)f'(x_i)}(x_i-c_i)(x_i-\xi).
\end{align*}
From Definition \ref{Def:Monotonic} and the input of Algorithm
\ref{LZ1}, we know that $f'(\xi) \neq 0$ and $f''(\xi) \neq 0$.
Hence,
\begin{align*}
\lim _{i \rightarrow \infty} \sup
\frac{|x_{i+1}-c_{i+1}|}{|x_{i}-c_{i}|^2} \leq
|\frac{f''(\xi)}{2f'(\xi)}| \neq 0.
\end{align*}
Therefore, the sequence of the lengths of the intervals has at least
quadratic convergence rate.
\end{proof}

\begin{algorithm}[h] \label{LZ2}
  \SetLine
  \KwData{A closed interval $[a,b]$ ($ab>0$ and $a<b$) in a monotonic convex isolation
  of a non-constant real univariate square-free polynomial $f$ and a positive integer
  $L$}

  \KwResult{A closed interval $[a',b']$ such that the only root $\xi$ of $f=0$ in $[a,b]$ is also
  in it and $(b'-a')/|\xi| \leq 10^{-L}$}
  \If{$|b-a| \leq 10^{-L} \min(|a|,|b|)$ }{\KwRet $[a,b]$;}
  \eIf{$f(a)f''(a)>0$ \label{Initial_X_Begin}}{$x \longleftarrow a$; $c \longleftarrow b$; }
  {$x \longleftarrow b$; $c \longleftarrow a$;}\label{Initial_X_End}

  $u \longleftarrow f(x)$; $v \longleftarrow f(c)$\;
  $z \longleftarrow c-v/f'(c)$\;
  \While{$z \not \in [a,b]$ \label{loop2begin}  }
  {$c \longleftarrow (xv-cu)/(v-u)$\;
  $v \longleftarrow f(c)$\;
  $z \longleftarrow c-v/f'(c)$\;} \label{loop2end}

  $x \longleftarrow z$\;

  \While{$|x-c|> 10^{-L} \min(|x|,|c|)$ \label{loop3begin}}{
    $u \longleftarrow f(x)$\;
    $c \longleftarrow (xv-cu)/(v-u)$\;
    \If{$|x-c| \leq 10^{-L} \min(|x|,|c|)$}{\KwRet $[\min(x,c),\max(x,c)]$;}
    $v \longleftarrow f(c)$\;
    $x \longleftarrow c-v/f'(c)$\;
    }\label{loop3end}

  \KwRet $[\min(x,c),\max(x,c)]$.
  \caption{LZ2}
\end{algorithm}

\begin{example} \label{Example:Interval_LZ2}
For the polynomial and the initial interval in Example
\ref{Example:LZ1}, we apply Algorithm \ref{LZ2} to them and get the
intermediate interval sequence (other than Example
\ref{Example:LZ1}, the intervals renew only one endpoint each time):

\begin{align*}
[a_0,b_0] & = [\frac{4389}{1024},\frac{1097}{256}] \\
[a_0,b_1] & = [\frac{4389}{1024},\frac{1261419417}{294336896}] \\
[a_1,b_1] & = [\frac{6671209230324943307293}{1556645655550311117184}, \frac{1261419417}{294336896}]\\
[a_1,b_2] & = [\frac{6671209230324943307293}{1556645655550311117184},\\
          & \frac{283700456965465533230109 \cdots \textup{$42$ digits are omited}}{66198056239039164770905 \cdots \textup{$42$ digits are omited}}]
\end{align*}

$(b_2-a_1)/b_2 \approx 0.1438320660 \times 10^{-10} < 10^{-8}$.
Hence, the algorithm Algorithm \ref{LZ2} terminates and outputs
$[a_1,b_2]$.
\end{example}

\begin{theorem} \label{Theorem:LZ2}
The iteration in Algorithm \ref{LZ2} converges and the convergence
rate is at least 3.
\end{theorem}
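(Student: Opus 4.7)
The plan is to mirror the structure of the proof of Theorem \ref{Theorem:LZ1}. First, I would verify that $\xi$ remains bracketed between the sequences $\{x_i\}$ and $\{c_i\}$ generated by the main loop; then derive one-step error identities for the secant update of $c$ and the Newton update of $x$; and finally combine them across one full iteration to read off the cubic rate.

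For the bracketing part I would first argue termination of the preliminary \textbf{while} loop on lines \ref{loop2begin}--\ref{loop2end}. After the initialization on lines \ref{Initial_X_Begin}--\ref{Initial_X_End}, $f(x)f''(x) > 0$ but $f(c)f''(c) < 0$, so a Newton step from $c$ may overshoot $[a,b]$; however, the secant replacement $c \leftarrow (xv - cu)/(v - u)$ keeps $c$ on its original side of $\xi$ (by the same Jensen-type convexity argument used in the proof of Theorem \ref{Theorem:LZ1}) and strictly shrinks $|c - \xi|$, so after finitely many steps the Newton image $z$ of $c$ lies in $[a,b]$, and by convexity strictly on the opposite side of $\xi$ from $c$. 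After $x \leftarrow z$, the main loop starts with the invariant that $x,c$ straddle $\xi$ with $f(x)f''(x) > 0$ and $f(c)f''(c) < 0$. This invariant is then preserved inductively in the main loop: the secant produces $c_{i+1}$ on the same side as $c_i$ (Jensen), and Newton from $c_{i+1}$ overshoots onto the opposite side (convexity). A small inductive check is needed that this overshoot stays in $[a,b]$, which is the main technical obstacle and which should follow because, once $c_{i+1}$ is close enough to $\xi$, its Newton image is squeezed between $\xi$ and the previous $x_i \in [a,b]$.

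Given the bracketing, classical Taylor expansions of $f(x_i)$ and $f(c_i)$ around $\xi$, substituted into the secant and Newton formulas, yield
\begin{align*}
c_{i+1} - \xi &= \frac{f''(\eta_1)}{2\, f'(\mu_1)}\,(x_i - \xi)(c_i - \xi), \\
x_{i+1} - \xi &= \frac{f''(\eta_2)}{2\, f'(c_{i+1})}\,(c_{i+1} - \xi)^2,
\end{align*}
for suitable $\eta_1, \eta_2, \mu_1$ in the current bracket; the first is derived exactly as the displayed identity in the proof of Theorem \ref{Theorem:LZ1} and the second is the standard Newton remainder. Write $\epsilon_i := |x_i - \xi|$, $\delta_i := |c_i - \xi|$, and $K := |f''(\xi)/(2 f'(\xi))|$, which is finite and nonzero by Definition \ref{Def:Monotonic} and the inputs of Algorithm \ref{LZ2}. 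The two identities give $\delta_{i+1} \leq (K + o(1))\, \epsilon_i \delta_i$ and $\epsilon_{i+1} \leq (K + o(1))\, \delta_{i+1}^2$. Because $x_i$ is itself a Newton iterate from $c_i$ for every $i \geq 0$ (true already of the initialization after the setup), the Newton estimate applied one step back gives $\epsilon_i \leq (K + o(1))\, \delta_i^2$, hence
\begin{equation*}
\delta_{i+1} \leq (K^2 + o(1))\, \delta_i^3 .
\end{equation*}
Since $\xi$ is bracketed, $|x_i - c_i| = \epsilon_i + \delta_i$, and since $\epsilon_i / \delta_i \to 0$, $|x_i - c_i|$ is asymptotic to $\delta_i$ and $|x_{i+1} - c_{i+1}|$ to $\delta_{i+1}$. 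Therefore
\begin{equation*}
\limsup_{i \to \infty} \frac{|x_{i+1} - c_{i+1}|}{|x_i - c_i|^3} \leq K^2,
\end{equation*}
establishing the claimed convergence rate of at least $3$.
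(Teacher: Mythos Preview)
Your proposal is correct and follows the same overall architecture as the paper's proof: first dispose of the preliminary \textbf{while} loop, then establish the bracketing invariant in the main loop, and finally extract the rate from Taylor/mean-value error identities. The paper handles the point you flag as ``the main technical obstacle'' (that the Newton image of $c$ stays in $[a,b]$) by observing that the Newton map $t\mapsto t-f(t)/f'(t)$ has derivative $f(t)f''(t)/f'(t)^2<0$ on the $c$-side, so as $c_i$ moves monotonically toward $\xi$ its Newton image moves monotonically toward $\xi$ from the other side and therefore remains trapped between $\xi$ and the first accepted $z\in[a,b]$; you may want to make that explicit.

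Where you genuinely diverge from the paper is in the cubic-rate computation. The paper performs a single, rather laborious third-order Taylor expansion of $c_{i+1}-\xi$ directly from the combined iteration and arrives at the explicit limit $\bigl|3f''(\xi)^2+4f'(\xi)f'''(\xi)\bigr|/\bigl(12f'(\xi)^2\bigr)$. You instead compose the two classical one-step identities---the exact divided-difference secant error $c_{i+1}-\xi=\dfrac{f''(\zeta)}{2f'(\mu)}(x_i-\xi)(c_i-\xi)$ and the exact Newton remainder $x_i-\xi=\dfrac{f''(\eta)}{2f'(c_i)}(c_i-\xi)^2$---to obtain $\delta_{i+1}=(K^2+o(1))\delta_i^{3}$ with $K=|f''(\xi)/(2f'(\xi))|$, bypassing any third-order expansion. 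Your route is shorter, more modular, and yields the constant $K^2$; the paper's longer calculation produces a different (and in fact not obviously reconcilable) constant, but either way both arguments deliver the claimed order at least~$3$.
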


\begin{proof}
The iteration in Algorithm \ref{LZ2} has two stages, i.e., the loops
in lines \ref{loop2begin}-\ref{loop2end} and in lines
\ref{loop3begin}-\ref{loop3end}.

We first prove that the loop in lines
\ref{loop2begin}-\ref{loop2end} terminates. In this process, $x$ is
fixed. By Jessen's inequality, we have that $f(c_i)f''(a) <0$ for
the $i$-th iteration of $c$ in this loop (cf. the proof of Theorem
\ref{Theorem:LZ1}). Hence, these $c_i$'s all locate at the other
side of $x$ w.r.t. $\xi$ and if the loop does not terminate the
sequence consisting of them monotonically converges to $\xi$. Thus,
the sequence $\{z_i\}_{i=0}^{\infty}$ will also converge to $\xi \in
(a,b)$, a contradiction. Therefore, the loop in lines
\ref{loop2begin}-\ref{loop2end} terminates and the number of
iterations is independent of $L$.

Next, we study the loop in lines \ref{loop3begin}-\ref{loop3end}
which is the iteration as
\begin{align}
x_{i+1} &= c_i - \frac{f(c_i)}{f'(c_i)} \label{Iteration:Newton_LZ2} \\
c_{i+1} &= \frac{c_i f(x_{i+1}) - x_{i+1} f(c_i)}{f(x_{i+1})-f(c_i)}
\label{Iteration:Secant_LZ2}
\end{align}
where $c_0$ is the last value of $c$ in the loop in lines
\ref{loop2begin}-\ref{loop2end} and $x_0$ is the $x$ assigned in
lines \ref{Initial_X_Begin}-\ref{Initial_X_End}. Consequently, $x_1$
locates in the open interval between $x_0$ and $\xi$. By Jessen's
inequality and from iteration (\ref{Iteration:Secant_LZ2}) we know
that $c_1$ locates in the open interval between $c_0$ and $\xi$.
Note that the derivative of the function $x(t):=t-f(t)/f'(t)$ is
$f(t)f''(t)/(f'(t))^2$ and that $f(c_0)f''(c_0)<0$. By induction on
$i$, it is not difficult to see that the two sequences
$\{x_i\}_{i=0}^{\infty}$ and $\{c_i\}_{i=0}^{\infty}$ all
monotonically converge to $\xi$ but from opposite directions. This
proves that the iteration in Algorithm \ref{LZ2} converges to $\xi$.

At last, we study the convergence rate of this iteration. It is in
fact the convergence rate of the second stage, i.e., the loop in
lines \ref{loop3begin}-\ref{loop3end}. Then we have that
\begin{align*}
x_{i+1}-c_{i+1} &= c_i - \frac{f(c_i)}{f'(c_i)}-\frac{c_i f(x_{i+1})
- x_{i+1}f(c_i)}{f(x_{i+1}) - f(c_i)}\\
                &=
                f(c_i)(\frac{1}{\frac{f(x_{i+1})-f(c_i)}{x_{i+1}-c_i}}-\frac{1}{f'(c_i)}).
\end{align*}
Expand $f(x_{i+1})$ at $c_i$ as
$f(c_i)+f'(c_i)(x_{i+1}-c_i)+f''(\lambda)(x_{i+1}-c_i)^2 /2$ where
$\eta$ is in the open interval between $c_i$ and $x_{i+1}$. Then,
\begin{align*}
x_{i+1}-c_{i+1} &= -f^2(c_i)\frac{f''(\lambda)}{2f'(\eta)f'(c_i)}\\
                &= -\frac{f'(\tau)^2f''(\lambda)}{2f'(\eta)f'(c_i)}(c_i-\xi)^2
\end{align*}
where $\tau \in (\min(c_i,\xi),\max(c_i,\xi))$ and $\eta \in
(\min(x_{i+1},c_i),\\ \max(x_{i+1},c_i))$. Hence, $\{|x_i -
c_i|\}_{i=0}^{\infty}$ and $\{ |c_i - \xi| \}_{i=0}^{\infty}$ have
the same convergence rate around $\xi$. Now, we study the latter.
Again, $f(x_{i+1})$ can be expanded at $c_i$ as
$f(c_i)+f'(c_i)(x_{i+1}-c_i)+f''(c_i)(x_{i+1}-c_i)^2
/2+f'''(\kappa)(x_{i+1}-c_i)^3 /6$ and $f(\xi)$ ($=0$) can be
expanded as $f(c_i) + f'(c_i)(\xi -c_i) + f''(\sigma)(\xi-c_i)^2/2$
and $f(c_i) + f'(c_i)(\xi -c_i) +
f''(c_i)(\xi-c_i)^2/2+f'''(\theta)(\xi-c_i)^3/6$. Then, from
\mbox{iteration (\ref{Iteration:Secant_LZ2})} we have that
\begin{align*}
c_{i+1}-\xi &=
\frac{(c_{i}-\xi)f(x_{i+1})-(x_{i+1}-\xi)f(c_{i})}{f(x_{i+1})-f(c_{i})}\\
            &= \frac{x_{i+1}-c_i}{f(x_{i+1})-f(c_i)}
(-f(c_i)+(c_i-\xi) \times
\\ & (f'(c_i)+\frac{f''(c_i)}{2}(x_{i+1}-c_i)+\frac{f'''(\kappa)}{6}(x_{i+1}-c_i)^2
)) \\
   &=\frac{1}{f'(\eta)}
   (-f(c_i)+(c_i-\xi)f'(c_i)- \\
   & \frac{f''(c_i)}{2f'(c_i)}f(c_i)(c_i-\xi)+
   \frac{f'''(\kappa)f'(\tau)^2}{6f'(c_i)^2}(c_i-\xi)^3)\\
   &=\frac{1}{f'(\eta)} (\frac{f''(c_i)}{2f'(c_i)}(c_i-\xi)((c_i-\xi)f'(c_i) -f(c_i)) + \\
   & \frac{1}{6}(f'''(\theta)+\frac{f'''(\kappa)f'(\tau)^2}{f'(c_i)^2})(c_i-\xi)^3) \\
   &= \frac{1}{f'(\eta)} (\frac{f''(c_i)f''(\sigma)}{4f'(c_i)} + \\
   & \frac{1}{6}(f'''(\theta)+\frac{f'''(\kappa)f'(\tau)^2}{f'(c_i)^2}))(c_i-\xi)^3.
\end{align*}
Therefore, $$\lim _{i \rightarrow \infty} \frac{ |c_{i+1} -
\xi|}{|c_{i} - \xi|^3} =
|\frac{3f''(\xi)^2+4f'(\xi)f'''(\xi)}{12f'(\xi)^2}|$$ which implies
that the sequence $\{c_i-\xi\}_{i=0}^{\infty}$ has at least cubic
convergence rate, and so does the interval iteration using
(\ref{Iteration:Newton_LZ2}) and (\ref{Iteration:Secant_LZ2}).
\end{proof}

\begin{remark}
The efficiency index of algorithms LZ1 and LZ2 are $\sqrt[3]{2}$ and
$\sqrt[3]{3}$, respectively. Hence, in the sense of efficiency index
LZ2 is also more efficient than LZ1.
\end{remark}

\section{Speed-up} \label{Speedup}
In this section, we study how to speed up LZ1 and LZ2 by using the
floating-point interval method.

As we have seen in Example \ref{Example:Interval_LZ2}, the sizes of
the integers representing the intervals increase dramatically. Given
$f \in \mathbb{Q}[x]$ and $p/q \in \mathbb{Q}$, then $f(p/q) \in
\mathbb{Q}$. In general, the maximal length of the numerator and
denominator of $f(p/q)$ approximates to
$\deg(f)\max(\textup{length}(p),\textup{length}(q))$. Hence, if
$\deg(f)$ is large, the computation will become quite time and
memory consuming.

Numerical computation with floating-point numbers can avoid this
difficulty; however, we cannot know exactly whether a numerical
result in common sense is reliable or not, i.e., its correct bits is
unclear or whether it is zero is unclear.

If floating-point numbers have definite representations, it is
possible to estimate the bounds of a numerical result
\cite{Rump88,Rump10,JohnsonKrandick97,CJK02}. Hence, for a nonzero
real number, a narrow interval with floating-point endpoints can
show its correct bits. But zero recognition remains an ill-posed
problem.

Note that we have removed all the ill-posed cases in the former
sections, i.e., the square-free decomposition to get multiplicities
and remove common zeros of $f$ anf $f'$ (cf. Section
\ref{Preliminaries}), the local monotonic convex decomposition to
remove common zeros of $f$ and $f''$ (cf. Section \ref{Monotonic
convex isolation}), the restriction $ab >0$ of input intervals of
LZ1 and LZ2 to remove zero roots of $f=0$ (cf. Section
\ref{Refinement}). As a result, the floating-point interval method
can be applied to evaluate the bounds of $f(p/q)$ (with Horner
scheme) and decide its sign in LZ1 and LZ2.

There exist several packages for the floating-point interval
computation, e.g. Rump's \verb"INTLAB/Matlab" package
\cite{Rump10,INTLAB}, Geulig, Kraemer and Grimmer's
\verb"intpack/Maple" package \cite{GKG} based on the package
\verb"intpak/Maple" \cite{CorlessConnell93} developed by Corless and
Connell, and Revol and Rouillier's MPFI open-source C library
\cite{RS}. We use the functions in the \verb"intpack/Maple" package,
because this package can deal with floating-point numbers with
arbitrary lengths and our algorithms need other functions in
\verb"Maple".

To apply the functions in \verb"intpacX", we should transform LZ1
and LZ2 into their floating-point interval versions and check the
computation step by step. Since many similar details should be
concerned, it is not suitable to present the pseudo codes here. So
we give some principles as follows.
\begin{enumerate}
  \item Pick the initial length $l$ (decimal digits) of floating-point numbers, e.g.
take $l := \max (\min(100,\deg(f) +5), \textup{digits of } a,
\textup{digits of } b)$.
  \item Replace every arithmetic in algorithms LZ1 and LZ2 by
corresponding interval arithmetic in \verb"intpakX".
  \item If an interval value of a polynomial at a point contains zero, then increase $l$ (e.g. $l :=
  2l$ for LZ1) and repeat relative computation until the endpoints of the new interval value share a fixed sign.
  \item Rewrite the inequality conditions in the loops and the first lines of the algorithms into interval versions,
   so that the algorithms can obtain stronger results than in the exact case.
\end{enumerate}

A natural question is whether the super-linear convergence rates can
be retained or not when we use the interval method with
floating-point numbers instead of exact computation with rational
numbers. The answer is ``YES". The proof of this claim should deal
with many details including why LZ1 and LZ2 can avoid ill-posed
cases. We omit most of them and only show the key insight. Suppose
that $[u,v]$ is an interval containing $\xi$ and $u$, $v$ are
included in two narrow enough intervals $[u_*,u^*]$ and $[v_*,v^*]$,
respectively, where $u_*$, $u^*$, $v_*$ and $v^*$ are all
floating-point numbers. Then, $\xi \in [u^*,v_*] \subset [u,v]$. To
determine whether the two intervals are enough narrow, we should
check the signs of the values of the polynomial at the endpoints of
$[u_*,u^*]$ and $[v_*,v^*]$. However, $[u_*,v^*]$ is a better choice
than $[u^*,v_*]$ in practice, since the former is less possible to
obtain a false isolating interval and hence makes the algorithms
more efficient than the latter.

\begin{example} \label{Example:Numerical_LZ2}
Consider the numerical version of Example
\ref{Example:Interval_LZ2}. Take the initial length of floats $l :=
\max (3+5,[1.6 \times 8])=12$. Then, the numerical LZ2 yields
\begin{align*}
[a_0,b_0] &=[ \underline{4.28}515625000,\underline{4.28}613281250], \\
[a_0,b_1] &=[ \underline{4.285}15625000,\underline{4.285}63130935], \\
[a_1,b_1] &=[\underline{4.285631}226694662183,\underline{4.285631}30935],\\
[a_1,b_2] &=[ \underline{4.285631226}694662183, \\
          &  \underline{4.285631226}70901127793655266277], \textup{ and} \\
(b_2 -a_1)/a_1 &= 0.334818704119335811719986541959 \times 10^{-11} \\
          &< 10^{-8}.
\end{align*}
To see the convergence rate, we continue to compute
\begin{align*}
[a_2,b_2] &=[\underline{4.285631226709011277936}4772441617276999\\
          & 1330501894, \underline{4.285631226709011277936}55266277].
\end{align*}
The correct digits of $[a_1,b_1]$ and $[a_2,b_2]$ are $7$ and $22$,
respectively. This shows that the iteration has cubic convergence
rate.
\end{example}
We can find that the expression of $b_2$ in Example
\ref{Example:Numerical_LZ2} is much shorter than that in Example
\ref{Example:Interval_LZ2} and that the swell of coefficients of
polynomials has been solved.

\section{Experiments} \label{Experiments}
To show the effectiveness of the algorithms LZ1 and LZ2, we
implemented them in \verb"Maple15" with the open source package
\verb"intpakX" \cite{GKG} and compared our implementations with the
\verb"Maple" function \verb"RefineBox" in the package
\verb"RegularChains" \cite{BCLM09}. All the experiments in this
section were done on a computer with Intel(R) Core(TM) i3-2100 CPU @
3.10GHz.

Chebyshev polynomials of the first kind were used as the tested
polynomials and were generated by the \verb"Maple" function
\verb"ChebyshevT". The testing results are listed in Tables
\ref{table1}-\ref{table4}. In each table, $L$ is the number of
correct digits of the output (cf. Algorithms \ref{LZ1} and Algorithm
\ref{LZ2}), ``ratio1" and ``ratio2" are the rounded time ratios
RefineBox/LZ1 and RefineBox/LZ2, respectively. For each polynomial,
we aimed to refining the isolating interval to a precision
$10^{-L}$. The CPU times in the tables were tested by the
\verb"Maple" function \verb"time".

For a polynomial $f$ with degree $n$ in Table \ref{table1}, Table
\ref{table3} and Table \ref{table4}, we isolated all its real zeros
by the function \verb"RealRootIsolate"\footnote{\scriptsize There
are three other methods for isolating real roots in Maple15. They
are \emph{realroot}, \emph{RealRootIsolate} with the option\emph{
method='Discoverer'} \cite{XiaYang02,XiaZhang06,DISCOVERER}, and
\emph{Isolate} \cite{RS,Rouillier99,RZ04} in the \emph{RootFinding}
package.}\cite{BCLM09} with the option \verb"'rerr'=1/2", picked the
$(n/2)$-th ``box", and refined the isolating interval until its
width was no larger than $10^{-5}$ by
\verb"RefineBox"\footnote{\scriptsize According to \mbox{Algorithm
6} in \cite{BCLM09}, this function performs a generalization of the
bisection method; but in this step, it took less than $1$ second in
all tested cases.} and viewed the result as the initial isolating
interval. For Table \ref{table3} and Table \ref{table4}, this result
is $[242345/262144, 484695/524288]$. Each $(n/2)$-th ``box" happened
to contain a zero of the largest irreducible factor $g$ in degree of
$f$ and contain no zeros of $g'$ or $g''$. We denote the degree of
this factor by $n^*$.

Factorizations for the polynomials in Table \ref{table2} were
time-consuming. By Algorithm \ref{LMCD}, we knew that these
polynomials themselves compose their local monotonic convex
decompositions. Consequently, they all have monotonic convex
isolations according to Definition \ref{Def:Local}. For a polynomial
$f$ in Table \ref{table2}, we used \verb"Isolate" (developed by
Fabrice Rouillier in C language) in the \verb"RootFinding" package
instead of \verb"RealRootIsolate", because the former is faster than
the latter when $n$ is large and \verb"RefineBox" need not be called
for the experiments in this table. The output intervals of
\verb"Isolate" with the option \verb"output=interval" were narrow
enough that $f'$ and $f''$ have no zeros and can be used as the
input intervals of LZ1 and LZ2. We picked the $(n/2)$-th element in
the output of \verb"Isolate".

From all of these tables, we can see that our implementations of LZ1
and LZ2 are much faster than the \verb"Maple" function
\verb"RefineBox" and in most cases LZ2 was more efficient than LZ1.
Since the environment variable \verb"Digits" in our implementations
of LZ1 and LZ2 did not change continuously, the time costs would
have jumps in the tables. Hence, it is not confusing that sometimes
LZ1 behaved a little better than LZ2 in our experiments.
\begin{table}[!h]
\caption{Timings (s) ($L=1000$) \label{table1}} \centering
\begin{tabular}{|c|c|c|c|c|c|c|}
\hline $n$& $n^*$ & RefineBox & LZ1   & ratio1& LZ2   & ratio2 \\
\hline 100& 80& 89.560   & 1.965 & 46    & 1.794 & 50   \\
\hline 200& 160& 347.695  & 4.492 & 77    & 3.900 & 89   \\
\hline 300& 160& 349.317  & 4.383 & 80    & 2.262 & 154   \\
\hline 400& 320& 1414.819 & 5.428& 261    & 5.974 & 237   \\
\hline 500& 400& 2232.982 & 11.076& 202    & 6.583 & 309   \\
\hline 600& 320& 1427.814 & 19.890& 72    & 5.007 & 285   \\
\hline 700& 480& 3370.198 & 16.770& 201    & 8.018 & 420   \\
\hline 800& 640& 5898.132 & 21.387& 276    & 10.920 & 540   \\
\hline 900& 480& 3332.259 & 14.757& 226    & 7.987 & 417   \\
\hline
\end{tabular}

\caption{ More comparisons ($L= 1000$) \label{table2}} \centering
\begin{tabular}{|c|c|c|c|c|c|c|c|}
\hline $n$ & 1000 & 1200   &  1400  & 1600   &  1800   &2000\\
\hline LZ1& 31.075& 27.284 & 44.928 & 27.346 &  59.061 & 34.398\\
\hline LZ2& 10.888& 18.626 & 15.880 & 25.958 &  29.764 & 33.275\\
\hline
\end{tabular}

\caption{Timings (s) ($n= 1000,n^*=800$) \label{table3}} \centering
\begin{tabular}{|c|c|c|c|c|c|}
\hline $L$ &  RefineBox & LZ1 & ratio1 & LZ2 & ratio2 \\
\hline 100&  55.629    &5.506 &  10    &3.978 & 14   \\
\hline 200&  217.075   &9.874&  22   &5.428 & 40   \\
\hline 300&  540.262   &11.887&  45   &5.974 & 90   \\
\hline 400& 1067.062   &11.934&  89   &5.787 & 184   \\
\hline 500& 1809.892   &14.040&  129   &8.361& 216   \\
\hline 600& 2792.495   &13.915&  201  &12.480& 224   \\
\hline 700& 4103.359   &13.884&  296  &12.558& 327  \\
\hline
\end{tabular}

\caption{ More comparisons ($n= 1000,n^*=800$) \label{table4}}
\centering
\begin{tabular}{|c|c|c|c|c|c|c|}
\hline $L$&800    & 900    & 1000   &  2000 & 3000   \\
\hline LZ1&14.008 & 19.578 & 48.297 &48.578& 63.679  \\
\hline LZ2&12.448 & 12.542 & 12.542 &27.705& 27.612 \\
\hline
\end{tabular}
\end{table}
\section{Conclusion}
In this paper, we provided a quadratically convergent algorithm LZ1
and a cubically convergent algorithm LZ2 to refine real roots of
univariate polynomial equations. Before applying LZ1 or LZ2, the
polynomial should be make the square-free decomposition and the
local monotonic convex decomposition (LMCD), so that every component
polynomial has a monotonic convex isolation (MCI). Moreover, we used
the interval method with floating-point numbers to estimate the
values of polynomials at rational points to improve the efficiency
the algorithms. Experiments on benchmark polynomials showed that if
we need high precisions of the real roots, then both of the two
algorithms are much faster than the function \verb"RefineBox" in
\verb"Maple15".

\section{Acknowledgments}
The author would like to thank Professor Lihong Zhi, Professor Na
Lei and Doctor Wei Niu for helpful discussions.

This work has been supported by a NKBRPC 2011CB302400, the Chinese
National Natural Science Foundation under Grants 91118001,
60821002/F02, 60911130369 and 10871194.

\bibliographystyle{abbrv}
\bibliography{rrr}

\end{document}